\newtheorem{theorem}{Théorème}
\newtheorem{lemma}[theorem]{Lemme}
\newenvironment{proof}{\trivlist
  \item[\hskip\labelsep{\itshape Preuve.}]\upshape}{\nobreak\noindent
  $\square$\endtrivlist}
\DeclareMathOperator\Hom{Hom}
\DeclareMathOperator\Ext{Ext}
\DeclareMathOperator\Tor{Tor}
\DeclareMathOperator\hd{hd}
\DeclareMathOperator\soc{soc}
\DeclareMathOperator\wt{wt}
\begin{document}
\title{Réflexions dans un cristal}
\author{Pierre Baumann, Stéphane Gaussent et Joel Kamnitzer}
\date{}
\maketitle

\begin{abstract}
\noindent
Soit $\mathfrak g=\mathfrak n^-\oplus\mathfrak h\oplus\mathfrak n^+$
une algèbre de Kac-Moody symétrisable. Soit $B(\infty)$ le cristal
de Kashiwara de $U_q(\mathfrak n^-)$, soit $\lambda$ un poids
dominant, soit $T_\lambda=\{t_\lambda\}$ le cristal à un élément
de poids $\lambda$, et soit $B(\lambda)\subseteq B(\infty)\otimes
T_\lambda$ le cristal de la représentation intégrable de plus haut
poids $\lambda$. Nous calculons les paramètres en cordes descendants
d'un élément $b\otimes t_\lambda$ de $B(\lambda)$ en fonction des
paramètres de Lusztig de $b$.
\end{abstract}

\section{Énoncé du résultat}
Soit $\mathfrak g$ une algèbre de Kac-Moody symétrisable.
Elle vient avec la décomposition triangulaire
$\mathfrak g=\mathfrak n^-\oplus\mathfrak h\oplus\mathfrak n^+$,
la famille $(\alpha_1,...,\alpha_n)$ des racines simples,
la famille $(\alpha_1^\vee,...,\alpha_n^\vee)$ des coracines
simples, et le groupe de Weyl $W$ engendré par les réflexions
simples $s_1$, ..., $s_n$. Soit $P^+$ l'ensemble des poids entiers
dominants.

À chaque $\lambda\in P^+$ correspond une représentation
irréductible intégrable $L(\lambda)$ de $\mathfrak g$. Le choix d'un
vecteur de plus haut poids $v_\lambda$ donne lieu à une surjection
$X\mapsto X\cdot v_\lambda$ de $U(\mathfrak n^-)$ sur $L(\lambda)$.
La combinatoire de cette situation est régie par les cristaux de
Kashiwara~\cite{Kashiwara95} : le cristal $B(\lambda)$ de $L(\lambda)$
est un sous-cristal de $B(\infty)\otimes T_\lambda$, où $B(\infty)$
est le cristal de $U_q(\mathfrak n^-)$ et $T_\lambda=\{t_\lambda\}$
est le cristal à un élément de poids $\lambda$. Le cristal
$B(\lambda)$ étant normal, nous avons
$$\varphi_i(b\otimes t_\lambda)=\max\{n\in\mathbb N\mid\tilde
f_i^n(b\otimes t_\lambda)\in B(\lambda)\}$$
pour tout $i\in\{1,...,n\}$ et tout $b\otimes t_\lambda\in B(\lambda)$.

Le cristal $B(\infty)$ est muni d'une involution $*$ (voir
\cite{Kashiwara95}, \S8.3), ce qui permet de définir des opérateurs
$\tilde e_i^*=*\tilde e_i*$. Dans \cite{Saito94}, Saito définit une
bijection
$$\sigma_i:\{b\in B(\infty)\mid\varepsilon_i(b)=0\}\to
\{b\in B(\infty)\mid\varepsilon_i(b^*)=0\}$$
par $\sigma_i(b)=\tilde f_i^{\varphi_i(b^*)}\tilde e_i^{*\max}b$.
Prolongeons $\sigma_i$ à $B(\infty)$ en posant
$\hat\sigma_i(b)=\sigma_i(\tilde e_i^{\max}b)$ pour tout $b\in B(\infty)$.

\begin{theorem}
\label{th:RefCris}
Soit $\lambda\in P^+$, soit $(s_{i_1},\ldots,s_{i_\ell})$ une
décomposition réduite dans $W$, et soit $b\in B(\infty)$ tel que
$b\otimes t_\lambda\in B(\lambda)$. Définissons par récurrence
des éléments $b_0$, ..., $b_\ell$ de $B(\infty)$ et des entiers
$c_1$, ..., $c_\ell$~par
$$b_0=b,\quad c_k=\varphi_{i_k}(b_{k-1}\otimes t_\lambda),\quad
b_k\otimes t_\lambda=\tilde f_{i_k}^{c_k}(b_{k-1}\otimes t_\lambda),$$
pour tout $k\in\{1,\ldots,\ell\}$. Posons enfin
$d_k=\langle\alpha_{i_k}^\vee,s_{i_{k-1}}\cdots
s_{i_1}\lambda\rangle$. Alors
\begin{equation}
\label{eq:RefCris}
\hat\sigma_{i_\ell}\cdots\hat\sigma_{i_1}b=\tilde
e_{i_\ell}^{*d_\ell}\cdots\tilde e_{i_1}^{*d_1}b_\ell.
\end{equation}
\end{theorem}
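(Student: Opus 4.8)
The plan is to prove \eqref{eq:RefCris} by induction on the length $\ell$ of the reduced word, the case $\ell=0$ being vacuous. The induction should peel off the \emph{last} letter: every prefix $(s_{i_1},\dots,s_{i_{\ell-1}})$ of a reduced word is again reduced, and the data $b_1,\dots,b_{\ell-1}$, $c_1,\dots,c_{\ell-1}$, $d_1,\dots,d_{\ell-1}$ together with the \emph{same} dominant weight $\lambda$ are precisely those the theorem attaches to that prefix, so the induction hypothesis yields $v:=\hat\sigma_{i_{\ell-1}}\cdots\hat\sigma_{i_1}b=\tilde e_{i_{\ell-1}}^{*d_{\ell-1}}\cdots\tilde e_{i_1}^{*d_1}b_{\ell-1}$ directly. (Peeling off the first letter would replace $\lambda$ by the generally non-dominant weight $s_{i_1}\lambda$, for which $B(s_{i_1}\lambda)$ is not even defined.) By the tensor product rule, $\tilde f_{i_k}(x\otimes t_\lambda)=(\tilde f_{i_k}x)\otimes t_\lambda$, so $b_k=\tilde f_{i_k}^{c_k}b_{k-1}$ already in $B(\infty)$, and the inductive step amounts to the single identity $\hat\sigma_{i_\ell}(v)=\tilde e_{i_\ell}^{*d_\ell}v'$, where $v':=\tilde e_{i_{\ell-1}}^{*d_{\ell-1}}\cdots\tilde e_{i_1}^{*d_1}\tilde f_{i_\ell}^{c_\ell}b_{\ell-1}$.

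Two ingredients feed the argument. First, Kashiwara's description of $B(\lambda)$ inside $B(\infty)\otimes T_\lambda$ — namely $b\otimes t_\lambda\in B(\lambda)$ if and only if $\varepsilon_j(b^*)\le\langle\alpha_j^\vee,\lambda\rangle$ for all $j$ — together with normality of $B(\lambda)$, recasts the definition of $c_k$ as $c_k=\max\{n\mid\varepsilon_j((\tilde f_{i_k}^nb_{k-1})^*)\le\langle\alpha_j^\vee,\lambda\rangle\ \forall\,j\}$. Examining how the functions $\varepsilon_j\circ{*}$ evolve under $\tilde f_{i_k}$ — they are non-decreasing, and the one with $j=i_k$ increases by $0$ or $1$ at each step and tends to $+\infty$ — one finds that it is the constraint $j=i_k$ that saturates, so that $\varepsilon_{i_k}(b_k^*)=\langle\alpha_{i_k}^\vee,\lambda\rangle$; this is what will make the $\tilde e^*$-powers on the right-hand side of \eqref{eq:RefCris} meaningful. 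Second, I would dispose of the case $\ell=1$ separately: for $x\in B(\infty)$ with $x\otimes t_\lambda\in B(\lambda)$ one should have $\hat\sigma_i(x)=\tilde e_i^{*\langle\alpha_i^\vee,\lambda\rangle}\tilde f_i^{\varphi_i(x\otimes t_\lambda)}x$. This is proved by unfolding $\hat\sigma_i(x)=\sigma_i(\tilde e_i^{\max}x)=\tilde f_i^{\varphi_i((\tilde e_i^{\max}x)^*)}\tilde e_i^{*\max}\tilde e_i^{\max}x$ and computing the three exponents inside the sub-object of $B(\infty)$ generated by $x$ under $\tilde e_i,\tilde f_i,\tilde e_i^*,\tilde f_i^*$, whose structure is completely explicit (\cite{Kashiwara95}, \S8.3; \cite{Saito94}), the first ingredient providing the values of $\varepsilon_i\circ{*}$ and $\varphi_i\circ{*}$ in terms of $\langle\alpha_i^\vee,\lambda\rangle$.

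It then remains to transport the case $\ell=1$, applied to $x=b_{\ell-1}$, through the operator $g:=\tilde e_{i_{\ell-1}}^{*d_{\ell-1}}\cdots\tilde e_{i_1}^{*d_1}$: one must pass from $\hat\sigma_{i_\ell}(b_{\ell-1})=\tilde e_{i_\ell}^{*\langle\alpha_{i_\ell}^\vee,\lambda\rangle}\tilde f_{i_\ell}^{c_\ell}b_{\ell-1}$ to $\hat\sigma_{i_\ell}(g\,b_{\ell-1})=\tilde e_{i_\ell}^{*d_\ell}\,g\,\tilde f_{i_\ell}^{c_\ell}b_{\ell-1}$. This requires commuting $\hat\sigma_{i_\ell}$ — which is built out of $\tilde e_{i_\ell},\tilde f_{i_\ell},\tilde e_{i_\ell}^*,\tilde f_{i_\ell}^*$ — past the string of $*$-operators at the earlier letters, and each elementary commutation in $B(\infty)$ between a $*$-operator at $i_\ell$ and one at $i_j$ produces a shift governed by $\langle\alpha_{i_\ell}^\vee,\alpha_{i_j}\rangle$. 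Summing these shifts against the exponents $d_j$ must carry $\langle\alpha_{i_\ell}^\vee,\lambda\rangle$ over to $d_\ell=\langle\alpha_{i_\ell}^\vee,s_{i_{\ell-1}}\cdots s_{i_1}\lambda\rangle$, which is exactly the expansion of $s_{i_{\ell-1}}\cdots s_{i_1}\lambda$ in terms of $\lambda$ and the $\alpha_{i_j}$ read off from the word; that $(s_{i_1},\dots,s_{i_\ell})$ is reduced is what guarantees both $d_\ell\ge 0$ and the consistency of this bookkeeping, including the contributions of the letters $i_j$ equal to $i_\ell$. I expect this last step — the precise propagation of the interlocked crystal and $*$-crystal commutation relations of $B(\infty)$ through $g$ — to be the main obstacle; it is the only place where one needs the joint behaviour of the two structures at two distinct indices, beyond the single-index picture that suffices for $\ell=1$.
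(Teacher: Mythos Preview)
Your plan is a purely combinatorial induction on $\ell$, and this is \emph{not} the route the paper takes. The paper does not induct on the word at the level of $B(\infty)$ at all: after a folding reduction to the symmetric case, it passes to the preprojective algebra $\Lambda$ and Lusztig's nilpotent varieties, where elements of $B(\infty)$ become irreducible components $\Lambda_b$, the operations $\tilde e_i,\tilde f_i,\tilde e_i^{*},\tilde f_i^{*}$ become $i$-head and $i$-socle constructions, and $\hat\sigma_{i_\ell}\cdots\hat\sigma_{i_1}$ becomes the functor $\Hom_\Lambda(I_w,-)$ for $w=s_{i_1}\cdots s_{i_\ell}$. Theorem~\ref{th:RefCris} is then the crystal translation of a module-theoretic statement (Theorem~\ref{th:DecMod}) about the saturation chain $M_0\subseteq\cdots\subseteq M_\ell$ inside the injective $\hat I_\lambda$, proved with tilting theory and the torsion pairs $(\mathscr T_w,\mathscr F_w)$, $(\mathscr T^w,\mathscr F^w)$ from \cite{BaumannKamnitzerTingley11,GeissLeclercSchroer11}. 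The point is that on the module side the composite $\Hom_\Lambda(I_{i_1},-)\circ\cdots\circ\Hom_\Lambda(I_{i_\ell},-)\cong\Hom_\Lambda(I_w,-)$ is \emph{a priori} a single functor independent of the reduced expression, so no commutation problem ever arises.

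The step you yourself flag as ``the main obstacle'' is a genuine gap, and I do not see how to close it with the tools you invoke. Your inductive step asks for an identity of the shape
\[
\hat\sigma_{i_\ell}\bigl(\tilde e_{i_{\ell-1}}^{*d_{\ell-1}}\cdots\tilde e_{i_1}^{*d_1}\,b_{\ell-1}\bigr)
=\tilde e_{i_\ell}^{*d_\ell}\,\tilde e_{i_{\ell-1}}^{*d_{\ell-1}}\cdots\tilde e_{i_1}^{*d_1}\,\tilde f_{i_\ell}^{c_\ell}b_{\ell-1},
\]
and you propose to derive it from ``elementary commutations'' between a $*$-operator at $i_\ell$ and one at $i_j$, each contributing a shift $\langle\alpha_{i_\ell}^\vee,\alpha_{i_j}\rangle$. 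But the operators $\tilde e_i^{*},\tilde e_j^{*}$ are conjugate by $*$ to $\tilde e_i,\tilde e_j$, which satisfy no two-term exchange relation in $B(\infty)$; there is simply no rule of the form $\tilde e_i^{*}\tilde e_j^{*d}=\tilde e_j^{*d'}\tilde e_i^{*}$ to appeal to. The known commutation (part of the Kashiwara--Saito axioms) is between an unstarred operator at $i$ and a starred operator at $j\neq i$, which helps you move $\tilde f_{i_\ell}$ past the factors of $g$ with $i_j\neq i_\ell$, but gives nothing for the factors with $i_j=i_\ell$ --- and such repeated letters occur already for $(1,2,1)$ in type $A_2$. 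Worse, $\hat\sigma_{i_\ell}$ is not a fixed operator: its internal exponents depend on $\varepsilon_{i_\ell}$ and $\varphi_{i_\ell}\circ *$ of the element it is applied to, so ``commuting it past $g$'' is not even a well-posed elementary move. Your heuristic that the shifts should add up to $d_\ell=\langle\alpha_{i_\ell}^\vee,s_{i_{\ell-1}}\cdots s_{i_1}\lambda\rangle$ is numerically suggestive, but without an actual commutation mechanism it is only a consistency check, not a proof; the paper's geometric detour is precisely what replaces this missing mechanism.
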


Le cas particulier $\ell=1$ de ce résultat est dû à Muthiah et
Tingley (\cite{MuthiahTingley12}, proposition~2.2).

Plaçons-nous sous les hypothèses du théorème~\ref{th:RefCris}.
Prenant $k\in\{1,\ldots,\ell\}$, écrivant~\eqref{eq:RefCris} pour
la décomposition réduite $(s_{i_1},\ldots,s_{i_k})$, et égalant
les poids des deux membres, nous trouvons
\begin{equation}
\label{eq:SommetPolMV}
s_{i_1}\cdots s_{i_k}\wt(\hat\sigma_{i_k}\cdots\hat
\sigma_{i_1}b)+\lambda=s_{i_1}\cdots s_{i_k}\wt(b_k\otimes t_\lambda).
\end{equation}
Notant $n_1$, ..., $n_\ell$ les entiers solutions du système
d'équations
$$s_{i_1}\cdots s_{i_k}\wt(\hat\sigma_{i_k}\cdots\hat\sigma_{i_1}b)-
\wt(b)=\sum_{p=1}^kn_p\;s_{i_1}\cdots s_{i_{p-1}}\alpha_{i_p},$$
pour $k\in\{1,\ldots,\ell\}$, nous obtenons alors
\begin{equation}
\label{eq:FormSophie}
n_k=-c_k-\langle\alpha_{i_k}^\vee,\wt(b_k\otimes t_\lambda)\rangle.
\end{equation}

Supposons qu'en outre $\mathfrak g$ soit de dimension finie, et
prenons pour $(s_{i_1},\ldots,s_{i_\ell})$ une décomposition réduite
de l'élement le plus long de~$W$. Il suit alors de \cite{Saito94}
que les entiers $n_1$, ..., $n_\ell$ sont les paramètres de Lusztig
de $b$ relativement à cette décomposition réduite (voir par
exemple \cite{BerensteinZelevinsky01}, \S3 pour cette notion).
Dans ces conditions, la formule~\eqref{eq:FormSophie} est équivalente
à la relation découverte par Morier-Genoud~\cite{Morier-Genoud03} entre
les paramètres de Lusztig de $b$ et les paramètres en cordes de $b'$,
où $b'\otimes t_\lambda$ est l'image de $b\otimes t_\lambda$ par
l'involution de Schützenberger de~$B(\lambda)$.

Toujours dans le cas où $\mathfrak g$ est de dimension finie,
l'égalité \eqref{eq:SommetPolMV} reflète la possibilité d'exprimer
les sommets du polytope de Mirković-Vilonen de $b\otimes t_\lambda$
de deux façons différentes : soit en termes de réflexions de Saito,
autrement dit en termes de paramètres de Lusztig, soit en termes
de la structure de cristal de $B(\lambda)$ (voir \cite{Ehrig10}, \S6).
La principale motivation du présent travail réside dans la possibilité
d'étendre partiellement ce résultat au cas où~$\mathfrak g$ est une
algèbre de Kac-Moody symétrisable.

\section{Démonstration}
Ramenons-nous au cas d'une matrice de Cartan symétrique grâce à
la méthode de repliement du diagramme de Dynkin (voir par exemple
\cite{Kashiwara96}, \S5). Soit $\Lambda$ la $\mathbb C$-algèbre
préprojective complétée construite sur le graphe de Dynkin de
$\mathfrak g$. En vecteur-dimension $\nu$, les structures de
$\Lambda$-module sont les points d'une variété affine $\Lambda(\nu)$,
appelée variété nilpotente de Lusztig. Les composantes irréductibles
de ces variétés sont indexées par $B(\infty)$ (voir
\cite{KashiwaraSaito97}, théorème~5.3.1) : à un élément
$b\in B(\infty)$ de poids $-\nu$ est associée une composante
irréductible $\Lambda_b$ de $\Lambda(\nu)$. Cette bijection permet
de lire les opérations de cristal de $B(\infty)$ en termes
d'opérations algébriques sur les $\Lambda$-modules. Le
théorème~\ref{th:RefCris} se trouve alors être la traduction
du théorème~\ref{th:DecMod} ci-dessous.

Pour $i\in\{1,...,n\}$, notons $S_i$ le $\Lambda$-module simple de
vecteur-dimension $\alpha_i$ et $I_i$ l'annulateur de $S_i$.
Nous définissons le $i$-socle $\soc_iM$ (respectivement, la
$i$-tête $\hd_iM$) d'un $\Lambda$-module $M$ comme étant le
plus grand sous-module (respectivement, quotient) de $M$
isomorphe à une somme directe de copies de $S_i$.
Alors $\soc_iM\cong\Hom_\Lambda(\Lambda/I_i,M)$
et $\hd_iM\cong(\Lambda/I_i)\otimes_\Lambda M$.

Si $(s_{i_1},\ldots,s_{i_\ell})$ est une décomposition réduite dans
$W$, alors le $\Lambda$-bimodule
$I_{i_1}\otimes_\Lambda\cdots\otimes_\Lambda I_{i_\ell}$ est
isomorphe à l'idéal produit $I_{i_1}\cdots I_{i_\ell}$ (voir
\cite{BuanIyamaReitenScott09}). Ce dernier ne dépend que du produit
$w=s_{i_1}\cdots s_{i_\ell}$ et peut donc être désigné par la
notation~$I_w$. Comme $I_w$ est basculant, la théorie de Brenner-Butler
fournit deux paires de torsion $(\mathscr T_w,\mathscr F_w)$ et
$(\mathscr T^w,\mathscr F^w)$ dans la catégorie des $\Lambda$-modules
de dimension finie, données par
\begin{xalignat*}2
\mathscr T_w&=\{T\mid I_w\otimes_\Lambda T=0\},&
\mathscr F_w&=\{T\mid\Tor_1^\Lambda(I_w,T)=0\},\\
\mathscr T^w&=\{T\mid\Ext^1_\Lambda(I_w,T)=0\},&
\mathscr F^w&=\{T\mid\Hom_\Lambda(I_w,T)=0\}.
\end{xalignat*}
Le sous-module de torsion d'un $\Lambda$-module $M$ de dimension
finie relativement à $(\mathscr T_w,\mathscr F_w)$ (respectivement,
$(\mathscr T^w,\mathscr F^w)$) est noté $M_w$ (respectivement, $M^w$).

\begin{lemma}
\label{le:LemmeIter}
Soit $w\in W$, soit $i\in\{1,...,n\}$, et soit $M$ un $\Lambda$-module
de dimension finie. Supposons que $s_iw>w$ et que
$\Ext^1_\Lambda(S_i,M)=0$. Alors $M^{s_iw}\cong I_i\otimes_\Lambda M^w$.
\end{lemma}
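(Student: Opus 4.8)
The plan is to reduce the statement $M^{s_iw}\cong I_i\otimes_\Lambda M^w$ to the identity $I_{s_iw}=I_i I_w$ together with the characterization of the torsion submodule $M^{s_iw}$ as the largest submodule in $\mathscr T^{s_iw}$, i.e.\ the largest submodule $N\subseteq M$ with $\Ext^1_\Lambda(I_{s_iw},N)=0$. First I would record the two facts that drive everything: since $s_iw>w$ we have $I_{s_iw}=I_iI_w\cong I_i\otimes_\Lambda I_w$ as $\Lambda$-bimodules (this is the result of Buan--Iyama--Reiten--Scott quoted above), and applying $-\otimes_\Lambda M$ to this composite, $I_{s_iw}\otimes_\Lambda M\cong I_i\otimes_\Lambda(I_w\otimes_\Lambda M)$. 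The strategy is to show that the canonical map $I_w\otimes_\Lambda M\to M$ has image lying inside $M^w$ and, more precisely, that tensoring by $I_i$ turns this into an embedding onto $M^{s_iw}$; the hypotheses $s_iw>w$ and $\Ext^1_\Lambda(S_i,M)=0$ are exactly what is needed for the relevant higher Tor/Ext terms to vanish.

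The key steps, in order, are as follows. (1) Since $I_w$ is tilting, $I_w\otimes_\Lambda M$ is concentrated in degree $0$ for $M\in\mathscr F_w$, but in general the derived tensor product has an $H_1$-term which is $\Tor_1^\Lambda(I_w,M)$; I would first observe that the image of $I_w\otimes_\Lambda M\to M$ is $M^w$ when $M\in\mathscr F_w$, and in the general case analyze the canonical triangle relating $M$, $M^w$, and $M/M^w\in\mathscr F^w$. (2) Apply $I_i\otimes_\Lambda-$ to the short exact sequence $0\to M^w\to M\to M/M^w\to 0$ and compare with the analogous sequence for $s_iw$: because $I_i\otimes_\Lambda-$ sends $\mathscr F^{s_iw}$-objects appropriately and kills exactly the $S_i$-isotypic part, the hypothesis $\Ext^1_\Lambda(S_i,M)=0$ guarantees that no spurious $\Tor_1^\Lambda(I_i,-)$ contributions appear from the quotient $M/M^w$. (3) Identify $I_i\otimes_\Lambda M^w$ with a submodule of $I_i\otimes_\Lambda M\cong$ (image in $M$), using that $I_i\otimes_\Lambda-$ is exact on the relevant subcategory, and check that this submodule is precisely the largest one annihilated by $\Ext^1_\Lambda(I_{s_iw},-)$, i.e.\ $M^{s_iw}$; here one uses $I_{s_iw}\cong I_i\otimes_\Lambda I_w$ and adjunction $\Ext^1_\Lambda(I_i\otimes_\Lambda I_w,N)\cong\Ext^1_\Lambda(I_w,\Hom_\Lambda(I_i,N))$ together with the torsion-pair description of $M^w$.

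The main obstacle I expect is step (2)–(3): controlling the interaction between the two torsion pairs $(\mathscr T^w,\mathscr F^w)$ and $(\mathscr T^{s_iw},\mathscr F^{s_iw})$ under the functor $I_i\otimes_\Lambda-$. The delicate point is that $I_i\otimes_\Lambda-$ is right-exact but not exact, so one must verify that $\Tor_1^\Lambda(I_i,M/M^w)=0$; this should follow from $M/M^w\in\mathscr F^w$ combined with $\Ext^1_\Lambda(S_i,M)=0$ — the latter hypothesis forces $M$ (hence its quotient) to have no obstruction to the $i$-th reflection functor, which is exactly the reflection functor realized by $I_i\otimes_\Lambda-$ in the sense of the BGP/Baumann--Kamnitzer picture. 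Once that vanishing is in hand, the composite isomorphism $I_i\otimes_\Lambda M^w\cong I_{s_iw}\otimes_\Lambda M\cong M^{s_iw}$ follows formally by chasing the two compatible short exact sequences and invoking the uniqueness of torsion submodules.
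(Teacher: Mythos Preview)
Your outline has a genuine gap at its foundation: the identification of $M^w$ you rely on is incorrect. The torsion submodule for the pair $(\mathscr T^w,\mathscr F^w)$ is given by the counit
\[
M^w\;\cong\;I_w\otimes_\Lambda\Hom_\Lambda(I_w,M)
\]
(this is \cite{BaumannKamnitzerTingley11}, théorème~5.4~(i)), \emph{not} by $I_w\otimes_\Lambda M$ or by the image of the multiplication map $I_w\otimes_\Lambda M\to M$; that image is $I_wM$, which pertains to the other torsion pair $(\mathscr T_w,\mathscr F_w)$. Consequently your concluding chain $I_i\otimes_\Lambda M^w\cong I_{s_iw}\otimes_\Lambda M\cong M^{s_iw}$ fails at both steps, since neither $I_w\otimes_\Lambda M\cong M^w$ nor $I_{s_iw}\otimes_\Lambda M\cong M^{s_iw}$ holds in general. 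The vanishing $\Tor_1^\Lambda(I_i,M/M^w)=0$ you aim for is also off target: $M/M^w\in\mathscr F^w$ says $\Hom_\Lambda(I_w,M/M^w)=0$, which gives no information about $\Tor$ against $I_i$, and the hypothesis $\Ext^1_\Lambda(S_i,M)=0$ concerns $M$, not this quotient.

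The paper's proof stays on the $\Hom$ side. From $I_{s_iw}\cong I_i\otimes_\Lambda I_w$ and adjunction one gets $\Hom_\Lambda(I_{s_iw},M)\cong\Hom_\Lambda(I_w,N)$ with $N:=\Hom_\Lambda(I_i,M)$, whence $M^{s_iw}\cong I_i\otimes_\Lambda N^w$ by two applications of the counit formula. The hypothesis $\Ext^1_\Lambda(S_i,M)=0$ is used precisely once: applying $\Hom_\Lambda(-,M)$ to $0\to I_i\to\Lambda\to S_i\to 0$ yields the short exact sequence $0\to\soc_iM\to M\to N\to 0$. Since $s_iw>w$ gives $\soc_iM\in\mathscr T_{s_i}\subseteq\mathscr T^w$, passing to torsion submodules produces $0\to\soc_iM\to M^w\to N^w\to 0$; tensoring by $I_i$ kills $\soc_iM$ and gives $I_i\otimes_\Lambda M^w\cong I_i\otimes_\Lambda N^w\cong M^{s_iw}$. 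The idea missing from your sketch is exactly this passage through $N=\Hom_\Lambda(I_i,M)$ rather than through $I_i\otimes_\Lambda M$.
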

\begin{proof}
Pour commencer, observons que $\soc_iM\in\mathscr T_{s_i}$
(\cite{BaumannKamnitzerTingley11}, exemple~5.6~(i)), d'où
$I_i\otimes_\Lambda\soc_iM=0$. Posons $N=\Hom_\Lambda(I_i,M)$.
Appliquant deux fois le théorème~5.4~(i) de
\cite{BaumannKamnitzerTingley11}, nous obtenons
$$M^{s_iw}\cong I_{s_iw}\otimes_\Lambda\Hom_\Lambda
(I_{s_iw},M)\cong I_i\otimes_\Lambda
I_w\otimes_\Lambda\Hom_\Lambda(I_w,N)\cong I_i\otimes_\Lambda N^w.$$

Compte tenu de l'isomorphisme $S_i\cong\Lambda/I_i$,
l'hypothèse $\Ext^1_\Lambda(S_i,M)=0$ conduit à la suite exacte
$0\to\soc_iM\to M\to N\to0$. L'hypothèse $s_iw>w$ entraîne que
$\mathscr T_{s_i}\subseteq\mathscr T^w$
(\cite{BaumannKamnitzerTingley11}, proposition~5.16), d'où
$\soc_iM\in\mathscr T^w$. Nous avons ainsi une suite exacte
$0\to\soc_iM\to M^w\to N^w\to0$, d'où nous déduisons que
$I_i\otimes_\Lambda M^w\cong I_i\otimes_\Lambda N^w\cong M^{s_iw}$.
\end{proof}

Pour un $\Lambda$-module $M$ et un mot $(i_1,\ldots,i_\ell)$,
définissons les sous-modules $\soc_{(i_1,\ldots,i_k)}M$ de $M$ par
récurrence sur $k\in\{0,\ldots,\ell\}$ de la façon suivante
(\cite{GeissLeclercSchroer11}, \S2.4) :
$$\soc_{()}M=0,\quad\soc_{(i_1,\ldots,i_k)}M/\soc_{(i_1,\ldots,
i_{k-1})}M=\soc_{i_k}(M/\soc_{(i_1,\ldots,i_{k-1})}M).$$
Pour $i\in\{1,...,n\}$, notons $\hat I_i$ l'enveloppe injective de $S_i$.
Pour $\lambda\in P^+$, posons
$\hat I_\lambda=\bigoplus_{i=1}^n\hat I_i^{\oplus\lambda_i}$, où
$\lambda_i=\langle\alpha_i^\vee,\lambda\rangle$.

\begin{lemma}
\label{le:LemmeGLS}
Soit $\lambda\in P^+$, soit $w\in W$, et soit $(s_{i_1},\ldots,s_{i_\ell})$
une décomposition réduite de $w$. Alors $\soc_{(i_1,\ldots,i_\ell)}\hat
I_\lambda$ est le plus grand sous-module de $\hat I_\lambda$ appartenant à
$\mathscr T_w$ et son vecteur-dimension est égal à $\lambda-w^{-1}\lambda$.
\end{lemma}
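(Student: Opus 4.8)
The plan is to prove both assertions simultaneously by induction on the length $\ell$ of the reduced decomposition. For $\ell=0$ the statement is vacuous: $\soc_{()}\hat I_\lambda=0$, and $0$ is the largest (indeed only) submodule lying in $\mathscr T_e=\{0\}$, with dimension vector $\lambda-\lambda=0$. For the inductive step, write $w=s_{i_1}w'$ with $w'=s_{i_2}\cdots s_{i_\ell}$ and $s_{i_1}w'>w'$, so that $(s_{i_2},\ldots,s_{i_\ell})$ is a reduced decomposition of $w'$. By definition of the iterated socle, $\soc_{(i_1,\ldots,i_\ell)}\hat I_\lambda$ is the preimage in $\hat I_\lambda$ of $\soc_{(i_2,\ldots,i_\ell)}(\hat I_\lambda/\soc_{i_1}\hat I_\lambda)$. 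So the first move is to understand $\soc_{i_1}\hat I_\lambda$ and the quotient $\hat I_\lambda/\soc_{i_1}\hat I_\lambda$.

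The key computation is that $\hat I_\lambda/\soc_{i_1}\hat I_\lambda$ is again, up to the relevant structure, an injective-type module to which the induction hypothesis applies. Concretely, I would use that $\soc_{i_1}\hat I_\lambda$ has dimension vector $\lambda_{i_1}\alpha_{i_1}=\langle\alpha_{i_1}^\vee,\lambda\rangle\alpha_{i_1}=(\lambda-s_{i_1}\lambda)$ as a multiple of $\alpha_{i_1}$, and identify $\hat I_\lambda/\soc_{i_1}\hat I_\lambda$ with $\hat I_{s_{i_1}\lambda}$ (this is the mirror, on the injective side, of the standard fact that $I_{i}\otimes_\Lambda\hat I_\lambda$ or an appropriate reflection functor sends $\hat I_\lambda$ to $\hat I_{s_i\lambda}$; one checks it by comparing socles and using that $\hat I_j$ for $j\neq i_1$ has no $S_{i_1}$ in its socle while $\hat I_{i_1}/S_{i_1}$ contributes the correct multiplicities). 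Granting this identification, the induction hypothesis applied to $w'$ and the dominant weight $s_{i_1}\lambda$ gives that $\soc_{(i_2,\ldots,i_\ell)}(\hat I_\lambda/\soc_{i_1}\hat I_\lambda)$ is the largest submodule of $\hat I_{s_{i_1}\lambda}$ lying in $\mathscr T_{w'}$, with dimension vector $s_{i_1}\lambda-w'^{-1}s_{i_1}\lambda=s_{i_1}\lambda-w^{-1}\lambda$. Adding back the dimension vector $\lambda-s_{i_1}\lambda$ of $\soc_{i_1}\hat I_\lambda$ yields $\lambda-w^{-1}\lambda$, as required.

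It remains to upgrade "largest submodule of the quotient in $\mathscr T_{w'}$" to "largest submodule of $\hat I_\lambda$ in $\mathscr T_w$". Here I would use the torsion-theoretic machinery: since $s_{i_1}w'>w'$, one has $\mathscr T_{s_{i_1}}\subseteq\mathscr T^{w'}$ and, dually, a submodule $U\subseteq\hat I_\lambda$ lies in $\mathscr T_w=\mathscr T_{s_{i_1}w'}$ if and only if $I_{i_1}\otimes_\Lambda U\in\mathscr T_{w'}$ and the "$S_{i_1}$-part" of $U$ behaves correctly — more precisely, $\mathscr T_w$ is characterised by a recollement of $\mathscr T_{s_{i_1}}$ and the pullback of $\mathscr T_{w'}$ along the quotient by $\soc_{i_1}$. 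I would extract the precise statement from \cite{BaumannKamnitzerTingley11} (the compatibility of the torsion pairs $\mathscr T_w$ with the factorisation $w=s_{i_1}w'$, cf.\ their Theorem~5.4 and Proposition~5.16, already invoked in Lemma~\ref{le:LemmeIter}) and from \cite{GeissLeclercSchroer11}, \S2.4, where the iterated socle is shown to compute exactly the maximal submodule annihilated by the corresponding product of ideals. The main obstacle is thus the bookkeeping that matches the recursive definition of $\soc_{(i_1,\ldots,i_\ell)}$ with the recursive factorisation of the tilting ideal $I_w=I_{i_1}I_{w'}$ and the induced torsion pairs; once the identification $\hat I_\lambda/\soc_{i_1}\hat I_\lambda\cong\hat I_{s_{i_1}\lambda}$ is in hand, the rest is a direct unwinding of the definitions and additivity of dimension vectors in short exact sequences.
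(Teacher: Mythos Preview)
Your inductive scheme breaks at the key identification $\hat I_\lambda/\soc_{i_1}\hat I_\lambda\cong\hat I_{s_{i_1}\lambda}$. First, $s_{i_1}\lambda$ is not dominant: $\langle\alpha_{i_1}^\vee,s_{i_1}\lambda\rangle=-\lambda_{i_1}\leq0$, so the symbol $\hat I_{s_{i_1}\lambda}$ is not defined by the paper's formula $\hat I_\mu=\bigoplus_j\hat I_j^{\oplus\mu_j}$. Second, even at the level of modules the claim is false: $\soc_{i_1}\hat I_\lambda=S_{i_1}^{\oplus\lambda_{i_1}}$, so the quotient is $\bigoplus_{j\neq i_1}\hat I_j^{\oplus\lambda_j}\oplus(\hat I_{i_1}/S_{i_1})^{\oplus\lambda_{i_1}}$, and $\hat I_{i_1}/S_{i_1}$ is in general \emph{not} a direct sum of indecomposable injectives $\hat I_j$. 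Already for the preprojective algebra of type $A_2$ one has $\hat I_1/S_1\cong S_2$, which is not $\hat I_2$. Hence the induction hypothesis, stated only for modules of the form $\hat I_\mu$ with $\mu\in P^+$, cannot be applied to the quotient, and the argument collapses. Your dimension-vector bookkeeping is arithmetically consistent, but it rests on this nonexistent isomorphism.

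The paper proceeds quite differently. It first reduces by additivity to the indecomposable case $\hat I_\lambda=\hat I_j$. Then it invokes the fact, from \cite{GeissLeclercSchroer11}, that $\soc_{(i_1,\ldots,i_\ell)}\hat I_j$ is an \emph{injective object of the category} $\mathscr T_w$; no induction on $\ell$ is needed. Given any submodule $X\subseteq\hat I_j$ with $X\in\mathscr T_w$, the sum $Y=X+\soc_{(i_1,\ldots,i_\ell)}\hat I_j$ lies in $\mathscr T_w$, and injectivity forces $\soc_{(i_1,\ldots,i_\ell)}\hat I_j$ to be a direct summand of $Y$. Since $\hat I_j$ has simple socle $S_j$, no nontrivial direct sum can sit inside it, so either $\soc_{(i_1,\ldots,i_\ell)}\hat I_j=Y\supseteq X$, or it is zero; in the latter case $j\notin\{i_1,\ldots,i_\ell\}$ and a dimension-vector argument shows $X=0$ as well. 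The dimension-vector formula $\lambda-w^{-1}\lambda$ is simply quoted from \cite{GeissLeclercSchroer11}. If you want to salvage an inductive approach, you would have to formulate and prove the statement for a class of modules closed under passing to $\hat I_\lambda/\soc_{i_1}\hat I_\lambda$, which is substantially more work than the paper's direct-summand trick.
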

\begin{proof}
L'énoncé équivaut à dire que
$\soc_{(i_1,\ldots,i_\ell)}\hat I_\lambda\in\mathscr T_w$ et que
$\Hom_\Lambda(X,\hat I_\lambda/\soc_{(i_1,\ldots,i_\ell)}\hat I_\lambda)=0$
pour tout module $X\in\mathscr T_w$. Par additivité, nous pouvons donc
nous ramener au cas où $\lambda$ est un poids fondamental, c'est-à-dire
$\hat I_\lambda$~est un module indécomposable~$\hat I_j$.

Le module $\soc_{(i_1,\ldots,i_\ell)}\hat I_j$ est le module noté
$I_{\mathbf i,j}$ dans \cite{GeissLeclercSchroer11}, \S2.4, avec
$\mathbf i=(i_1,\ldots,i_\ell)$. C'est un objet injectif de la
catégorie $\mathscr T_w$ (\cite{GeissLeclercSchroer11},
théorème~2.8~(iii) et \cite{BaumannKamnitzerTingley11}, exemple~5.15).
Montrons qu'il contient tous les sous-modules de $\hat I_j$
appartenant à $\mathscr T_w$. Soit $X$ un tel sous-module.
Alors la somme $Y=X+\soc_{(i_1,\ldots,i_\ell)}\hat I_j$ appartient à
$\mathscr T_w$. Comme $\soc_{(i_1,\ldots,i_\ell)}\hat I_j$ est injectif
dans $\mathscr T_w$, il est un facteur direct de $Y$. Or tous les
sous-modules non-nuls de $\hat I_j$ contiennent son socle $S_j$,
ce qui exclut l'existence de somme directe non-triviale à l'intérieur
de $\hat I_j$. Par conséquent, $\soc_{(i_1,\ldots,i_\ell)}\hat I_j$
est soit égal à $Y$ tout entier, soit réduit à~$0$. La seconde
possibilité a lieu quand $j\notin\{i_1,\ldots,i_\ell\}$, mais dans
ce cas $X$ est lui aussi nul, car pour des raisons de
vecteur-dimension, il ne peut pas contenir le socle $S_j$ de
$\hat I_j$ (\cite{GeissLeclercSchroer11}, corollaire~9.3 et
lemme~10.2). Donc $\soc_{(i_1,\ldots,i_\ell)}\hat I_j$ contient $X$
dans les deux cas de figure.

Enfin, l'assertion sur le vecteur-dimension est prouvée dans
\cite{GeissLeclercSchroer11}, corollaire~9.2.
\end{proof}

\begin{theorem}
\label{th:DecMod}
Soit $\lambda\in P^+$, soit $w\in W$, soit $(s_{i_1},\ldots,s_{i_\ell})$
une décomposition réduite de $w$, et soit $M$ un sous-module
de dimension finie de $\hat I_\lambda$. Construisons par récurrence
une chaîne $M_0\subseteq M_1\subseteq\cdots\subseteq M_\ell$ de
sous-modules de $\Hat I_\lambda$ de la façon suivante :
$$M_0=M,\quad M_k/M_{k-1}=\soc_{i_k}(\hat I_\lambda/M_{k-1}).$$
Alors $(M_\ell)_w=\soc_{(i_1,\ldots,i_\ell)}\hat I_\lambda$ et
$M_\ell/(M_\ell)_w\cong\Hom_\Lambda(I_w,M)$.
\end{theorem}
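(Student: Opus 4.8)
My plan is to treat the two assertions separately; only the second one needs an induction on $\ell$.

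For the equality $(M_\ell)_w=\soc_{(i_1,\ldots,i_\ell)}\hat I_\lambda$, I would first check by induction on $k$ that $\soc_{(i_1,\ldots,i_k)}\hat I_\lambda\subseteq M_k$: the canonical surjection $\hat I_\lambda/\soc_{(i_1,\ldots,i_{k-1})}\hat I_\lambda\to\hat I_\lambda/M_{k-1}$ carries the $i_k$-socle of its source into a semisimple $S_{i_k}$-isotypic submodule of its target, hence into $\soc_{i_k}(\hat I_\lambda/M_{k-1})=M_k/M_{k-1}$; taking preimages in $\hat I_\lambda$ gives the inclusion. By Lemme~\ref{le:LemmeGLS} the module $\soc_{(i_1,\ldots,i_\ell)}\hat I_\lambda$ belongs to $\mathscr T_w$ and is the largest submodule of $\hat I_\lambda$ with that property. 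So on the one hand $\soc_{(i_1,\ldots,i_\ell)}\hat I_\lambda\subseteq M_\ell$ lies in $\mathscr T_w$, hence is contained in the torsion submodule $(M_\ell)_w$; on the other hand $(M_\ell)_w$ is a submodule of $\hat I_\lambda$ lying in $\mathscr T_w$, hence is contained in $\soc_{(i_1,\ldots,i_\ell)}\hat I_\lambda$. They are therefore equal, which also settles $(M_{\ell-1})_{w'}=\soc_{(i_1,\ldots,i_{\ell-1})}\hat I_\lambda$ for the shorter word $w'=s_{i_1}\cdots s_{i_{\ell-1}}$.

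The key to the second assertion is the following elementary remark, which is the case $\ell=1$ but with an arbitrary ambient module. Let $E$ be a $\Lambda$-module with $\Ext^1_\Lambda(S_i,E)=0$, let $N\subseteq E$, and let $N_1$ be the preimage in $E$ of $\soc_i(E/N)$. Then $(N_1)_{s_i}=\soc_i E$, and $x\mapsto(a\mapsto ax)$ defines an isomorphism $N_1/\soc_i E\xrightarrow{\sim}\Hom_\Lambda(I_i,N)$. Indeed this map is well defined because $I_i$ kills $S_i$, hence $\soc_i(E/N)$, so that $I_ix\subseteq N$ whenever $x\in N_1$; its kernel is $\{x\in N_1:I_ix=0\}=N_1\cap\soc_i E=\soc_i E$; and it is surjective because any $f\colon I_i\to N$, regarded as a map to $E$, extends to a map $\Lambda\to E$ (the obstruction lying in $\Ext^1_\Lambda(\Lambda/I_i,E)=\Ext^1_\Lambda(S_i,E)=0$), and the value at $1$ of such an extension lies in $N_1$. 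Finally $(N_1)_{s_i}$ is the largest submodule of $N_1$ isomorphic to a sum of copies of $S_i$ (\cite{BaumannKamnitzerTingley11}, exemple~5.6), that is $\soc_i N_1=\soc_i E$. Now I argue by induction on $\ell$, the case $\ell=0$ being trivial. Write $w=w's_{i_\ell}$, so $\ell(w)=\ell(w')+1$ and $I_w\cong I_{w'}\otimes_\Lambda I_{i_\ell}$. Applying the case $\ell-1$ to $M$ (and using the first assertion) gives $M_{\ell-1}/\soc_{(i_1,\ldots,i_{\ell-1})}\hat I_\lambda\cong\Hom_\Lambda(I_{w'},M)$, and applying it to $\hat I_\lambda$ itself, for which the chain is constant, gives $\hat I_\lambda/\soc_{(i_1,\ldots,i_{\ell-1})}\hat I_\lambda\cong\Hom_\Lambda(I_{w'},\hat I_\lambda)$. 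Set $E=\hat I_\lambda/\soc_{(i_1,\ldots,i_{\ell-1})}\hat I_\lambda$ and $N=M_{\ell-1}/\soc_{(i_1,\ldots,i_{\ell-1})}\hat I_\lambda\subseteq E$; then $E/N=\hat I_\lambda/M_{\ell-1}$, the preimage $N_1$ of $\soc_{i_\ell}(E/N)$ in $E$ equals $M_\ell/\soc_{(i_1,\ldots,i_{\ell-1})}\hat I_\lambda$, and $\soc_{i_\ell}E=\soc_{(i_1,\ldots,i_\ell)}\hat I_\lambda/\soc_{(i_1,\ldots,i_{\ell-1})}\hat I_\lambda$ by the definition of the iterated socle. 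Granting for a moment that $\Ext^1_\Lambda(S_{i_\ell},E)=0$, the remark applied to $(E,N,i_\ell)$ gives $N_1/\soc_{i_\ell}E\cong\Hom_\Lambda(I_{i_\ell},N)$; by the first assertion the left-hand side is $M_\ell/\soc_{(i_1,\ldots,i_\ell)}\hat I_\lambda=M_\ell/(M_\ell)_w$, while the right-hand side is $\Hom_\Lambda(I_{i_\ell},\Hom_\Lambda(I_{w'},M))\cong\Hom_\Lambda(I_{w'}\otimes_\Lambda I_{i_\ell},M)\cong\Hom_\Lambda(I_w,M)$ by tensor-hom adjunction. This closes the induction.

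There remains the vanishing $\Ext^1_\Lambda(S_{i_\ell},E)=0$ for $E=\hat I_\lambda/\soc_{(i_1,\ldots,i_{\ell-1})}\hat I_\lambda$; this is the heart of the matter, and the only place where reducedness of $(s_{i_1},\ldots,s_{i_\ell})$ genuinely enters. Using the isomorphism $E\cong\Hom_\Lambda(I_{w'},\hat I_\lambda)$ already obtained and the sequence $0\to I_{i_\ell}\to\Lambda\to S_{i_\ell}\to0$, one identifies $\Ext^1_\Lambda(S_{i_\ell},E)$ with the cokernel of the restriction map $\Hom_\Lambda(I_{w'},\hat I_\lambda)\to\Hom_\Lambda(I_{i_\ell},\Hom_\Lambda(I_{w'},\hat I_\lambda))\cong\Hom_\Lambda(I_{w'}\otimes_\Lambda I_{i_\ell},\hat I_\lambda)$ induced by the multiplication $I_{w'}\otimes_\Lambda I_{i_\ell}\to I_{w'}$. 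By \cite{BuanIyamaReitenScott09} this multiplication is injective with image $I_{w'}I_{i_\ell}=I_w$, precisely because $\ell(w's_{i_\ell})=\ell(w')+1$, and $I_{w'}/I_w$ is finite-dimensional; applying the exact functor $\Hom_\Lambda(-,\hat I_\lambda)$ to $0\to I_w\to I_{w'}\to I_{w'}/I_w\to0$ shows the restriction map to be onto, so the cokernel vanishes. The one point on which I would expect to spend real care is the double application of the inductive hypothesis to $\hat I_\lambda$, which need not be finite-dimensional: one should either observe that the statement and its proof go through verbatim for an arbitrary submodule of $\hat I_\lambda$, or pass to the colimit over the finite-dimensional submodules, using that $I_{w'}$ is finitely presented and that the iterated socle commutes with filtered colimits.
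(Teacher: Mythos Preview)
Your treatment of the first assertion is essentially the paper's: both show $\soc_{(i_1,\ldots,i_\ell)}\hat I_\lambda\subseteq M_\ell$ by pushing forward along the surjection $\hat I_\lambda\to\hat I_\lambda/M$, then invoke Lemme~\ref{le:LemmeGLS}.

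For the second assertion your route is genuinely different. The paper does not induct on $\ell$ at the level of the theorem; instead it works with the other torsion pair $(\mathscr T^v,\mathscr F^v)$. It first observes that $S_{i_k}\in\mathscr F^{s_{i_k}\cdots s_{i_\ell}}$, hence $(M_{k-1})^{s_{i_k}\cdots s_{i_\ell}}=(M_k)^{s_{i_k}\cdots s_{i_\ell}}$; then, using $\Ext^1_\Lambda(S_{i_k},M_k)=0$ (from injectivity of $\hat I_\lambda$ and $\soc_{i_k}(\hat I_\lambda/M_k)=0$), it applies Lemme~\ref{le:LemmeIter} repeatedly to obtain $M^w\cong I_w\otimes_\Lambda M_\ell$, and concludes via $M_\ell/(M_\ell)_w\cong\Hom_\Lambda(I_w,I_w\otimes_\Lambda M_\ell)$ and $\Hom_\Lambda(I_w,M/M^w)=0$. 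Your argument instead bypasses Lemme~\ref{le:LemmeIter} and the pair $(\mathscr T^v,\mathscr F^v)$ entirely: the elementary isomorphism $N_1/\soc_iE\cong\Hom_\Lambda(I_i,N)$ plus tensor--hom adjunction does the inductive step directly. This is more self-contained, and it makes the role of $\Hom_\Lambda(I_w,-)$ transparent; the paper's approach, on the other hand, yields the extra information $M^w\cong I_w\otimes_\Lambda M_\ell$ and places the result squarely inside the Brenner--Butler framework.

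The one point deserving care is the one you flag: applying the inductive hypothesis to $\hat I_\lambda$ when it need not be finite-dimensional. Either of your proposed fixes works. Perhaps cleanest is to note that the identification $E\cong\Hom_\Lambda(I_{w'},\hat I_\lambda)$ follows directly, without induction, from the exactness of $\Hom_\Lambda(-,\hat I_\lambda)$ applied to $0\to I_{w'}\to\Lambda\to\Lambda/I_{w'}\to0$ once one knows that $\soc_{(i_1,\ldots,i_{\ell-1})}\hat I_\lambda$ coincides with the $I_{w'}$-annihilated part of $\hat I_\lambda$; this in turn is immediate from Lemme~\ref{le:LemmeGLS} together with the fact that $\mathscr T_{w'}$ is exactly the category of finite-dimensional $\Lambda/I_{w'}$-modules.
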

\begin{proof}
Des arguments classiques montrent que si $f:X\to Y$ est un
homomorphisme de $\Lambda$-modules, alors
$f(\soc_{(i_1,\ldots,i_\ell)}X)\subseteq\soc_{(i_1,\ldots,i_\ell)}Y$.
Appliquant ce résultat à la surjection $\hat I_\lambda\to\hat I_\lambda/M$,
nous obtenons que $\soc_{(i_1,\ldots,i_\ell)}\hat I_\lambda$ est
inclus dans $M_\ell$. La première égalité de l'énoncé découle alors
du lemme~\ref{le:LemmeGLS}.

Par ailleurs, $S_{i_k}\in\mathscr F^{s_{i_k}\cdots s_{i_\ell}}$
pour tout $k\in\{1,\ldots,\ell\}$, d'où
$(M_{k-1})^{s_{i_k}\cdots s_{i_\ell}}=(M_k)^{s_{i_k}\cdots s_{i_\ell}}$.
La définition de $M_k$ entraîne que $\soc_{i_k}(\hat I_\lambda/M_k)=0$,
autrement dit $\Hom_\Lambda(S_{i_k},\hat I_\lambda/M_k)=0$ ; compte
tenu du caractère injectif de $\hat I_\lambda$, cela donne
$\Ext^1_\Lambda(S_{i_k},M_k)=0$. Une utilisation répétée du
lemme~\ref{le:LemmeIter} conduit alors à
$M^w\cong I_w\otimes_\Lambda M_\ell$. Utilisant les relations
$M_\ell/(M_\ell)_w\cong\Hom_\Lambda(I_w,I_w\otimes_\Lambda M_\ell)$
(\cite{BaumannKamnitzerTingley11}, théorème~5.4~(ii))
et $\Hom_\Lambda(I_w,M/M^w)=0$, nous obtenons la seconde égalité
de l'énoncé.
\end{proof}

Nous pouvons maintenant démontrer le théorème~\ref{th:RefCris}.
Adoptons les notations utilisées dans son énoncé. Soit $M$ un point
général de la composante irréductible $\Lambda_b$. La condition
$b\otimes t_\lambda\in B(\lambda)$ se traduit par
$\varepsilon_i(b^*)\leq\langle\alpha_i^\vee,\lambda\rangle$
pour tout $i\in\{1,...,n\}$ (\cite{Kashiwara95}, proposition~8.2),
d'où $\dim\soc_iM\leq\dim\soc_i\hat I_\lambda$. Il existe
donc une inclusion $M\hookrightarrow\hat I_\lambda$. Les modules
$M_k$ construits dans l'énoncé du théorème~\ref{th:DecMod}
sont alors des points généraux des composantes $\Lambda_{b_k}$ et
le module $\Hom_\Lambda(I_w,M)$ est un point général de
$\Lambda_{\hat\sigma_{i_\ell}\cdots\hat\sigma_{i_1}b}$
(\cite{BaumannKamnitzerTingley11}, proposition~5.24).
Il reste à observer que $\soc_{(i_1,\ldots,i_k)}\hat
I_\lambda/\soc_{(i_1,\ldots, i_{k-1})}\hat I_\lambda$
est isomorphe à la somme directe de $d_k$ copies de~$S_{i_k}$.

Pierre Baumann,
Institut de Recherche Mathématique Avancée,
Université de Strasbourg et CNRS,
7 rue René Descartes,
67084 Strasbourg Cedex,
France.\\
\texttt{p.baumann@unistra.fr}
\medskip

Stéphane Gaussent,
Université de Lyon,
ICJ (UMR 5208),
Université Jean Monnet,
42023 Saint-Etienne Cedex 2,
France.\\
\texttt{stephane.gaussent@univ-st-etienne.fr}
\medskip

Joel Kamnitzer,
Department of Mathematics,
University of Toronto,
Toronto, ON, M5S 2E4
Canada.\\
\texttt{jkamnitz@math.toronto.edu}
\end{document}